\newcommand{\Qp}{\mathbf{Q}_p}
\newcommand{\Zp}{\mathbf{Z}_p}
\newcommand{\ZZ}{\mathbf{Z}}
\newcommand{\OO}{\mathcal{O}}
\newcommand{\MM}{\mathfrak{m}}
\newcommand{\Qpbar}{\overline{\mathbf{Q}}_p}
\renewcommand{\phi}{\varphi}
\renewcommand{\projlim}{\varprojlim}
\renewcommand{\geq}{\geqslant}
\renewcommand{\leq}{\leqslant} 
\newcommand{\Gal}{\operatorname{Gal}}
\newcommand{\galp}{\Gal(\Qpbar/\Qp)}
\newcommand{\dfont}{\mathrm{D}}
\newcommand{\nwach}{\mathrm{N}}
\newcommand{\dcris}{\mathrm{D}_{\operatorname{cris}}}
\newcommand{\ind}{\operatorname{ind}}
\newcommand{\Mat}{\operatorname{Mat}}
\newcommand{\M}{\operatorname{M}}
\newcommand{\GL}{\operatorname{GL}}
\newcommand{\Id}{\operatorname{Id}}
\newcommand{\dpar}[1]{(\!( #1 )\!)}
\newcommand{\dcroc}[1]{[\![ #1 ]\!]}
\newcommand{\calA}{\mathcal{A}}
\newcommand{\calR}{\mathcal{R}}
\newcommand{\scrX}{\mathscr{X}}
\newcommand{\scrU}{\mathscr{U}}
\newcommand{\Fil}{\operatorname{Fil}}
\newcommand{\Tr}{\operatorname{Tr}}
\newcommand{\rep}{\operatorname{rep}}
\newcommand{\eps}{\varepsilon}
\newcommand{\vp}{\operatorname{val}_p}
\newcommand{\smat}[1]{\left( \begin{smallmatrix} #1 \end{smallmatrix} \right)}
\author{Laurent Berger}
\address{UMPA ENS de Lyon \\
UMR 5669 du CNRS \\
Universit\'e de Lyon}
\email{laurent.berger@ens-lyon.fr}
\urladdr{www.umpa.ens-lyon.fr/\~{}lberger/}
\date{September 2010}
\title[Local constancy for the reduction of some crystalline representations]
{Local constancy for the reduction mod $p$ of $2$-dimensional crystalline representations}
\subjclass{11F, 11S, 11Y}
\keywords{Galois representations, $(\varphi,\Gamma)$-modules, Wach modules, weight space}
\thanks{This research is partially supported by the ANR grant CETHop (calculs effectifs en th\'eorie de Hodge $p$-adique) ANR-09-JCJC-0048-01}
\begin{document}

\begin{abstract}
Irreducible crystalline representations of dimension $2$ of $\Gal(\Qpbar/\Qp)$ depend up to twist on two parameters, the weight $k$ and the trace of frobenius $a_p$. We show that the reduction modulo $p$ of such a representation is a locally constant function of $a_p$ (with an explicit radius) and a locally constant function of the weight $k$ if $a_p \neq 0$. We then give an algorithm for computing the reductions modulo $p$ of these representations. The main ingredient is Fontaine's theory of $(\varphi,\Gamma)$-modules as well as the theory of Wach modules.
\end{abstract}

\maketitle

\tableofcontents

\setlength{\baselineskip}{18pt}

\section*{Introduction}

Let $p$ be a prime number $\neq 2$ and $E$ a finite extension of $\Qp$ with ring of integers $\OO_E$ and maximal ideal $\MM_E$ and uniformizer $\pi_E$ and residue field $k_E$. If $k \geq 2$ and $a_p \in \MM_E$, let $D_{k,a_p}$ be the filtered $\phi$-module given by $D_{k, a_p} = E e_1 \oplus E e_2$ where :
\[ \begin{cases} \phi(e_1) = p^{k-1} e_2 \\
\phi(e_2) = -e_1 + a_p e_2 
\end{cases}
\text{and}\quad
\Fil^i D_{k, a_p} = \begin{cases}
D_{k, a_p} & \text{if $i \leq 0$,} \\
E e_1 & \text{if $1 \leq i \leq k-1$,} \\
0 & \text{if $i \geq k$.}
\end{cases} \]

By the theorem of Colmez-Fontaine (th\'eor\`eme A of \cite{CF}), there exists a crystalline $E$-linear representation $V_{k,a_p}$ of $\galp$ such that $\dcris(V^*_{k,a_p}) = D_{k,a_p}$ where $V^*_{k,a_p}$ is the dual of $V_{k,a_p}$. The representation $V_{k,a_p}$ is crystalline, irreducible, and its Hodge-Tate weights are $0$ and $k-1$.  Let $T$ denote a $\galp$-stable lattice of $V_{k,a_p}$ and let $\overline{V}_{k,a_p}$ be the semisimplification of $T/\pi_E T$. It is well-known that $\overline{V}_{k,a_p}$ depends only on $V_{k,a_p}$ and not on the choice of $T$. 

We should therefore be able to describe $\overline{V}_{k,a_p}$ in terms of $k$ and $a_p$ but this seems to be a difficult problem. Note that it is easy to make a list of all semisimple $2$-dimensional $k_E$-linear representations of $\galp$: they are twists of $\ind(\omega_2^r)$ (in the notation of \cite{BR2}; $\omega_2$ is the fundamental character of level $2$) for some $r\in \ZZ$ or direct sums of two characters.

If $2 \leq k \leq p$, then the theory of Fontaine-Laffaille gives us $\overline{V}_{k,a_p} = \ind(\omega_2^{k-1})$. If $k=p+1$ or $k \geq p+2$ and $v_p(a_p)  > \lfloor (k-2)/(p-1) \rfloor$, then theorem 4.1.1, remark 4.1.2 and proposition 4.1.4 of \cite{BLZ} show that $\overline{V}_{k,a_p} = \ind(\omega_2^{k-1})$. For other values of $a_p$ we can get a few additional results by using the $p$-adic Langlands correspondence (see \cite{BUG} or conjecture 1.5 of \cite{BR2}, combined with \cite{LBa}) or by computing the reduction in specific cases using congruences of modular forms (Savitt-Stein and Buzzard, see for instance \S 6.2 of \cite{BR2}). However, no general formula is known or even conjectured.

Our first result is that the function $a_p \mapsto \overline{V}_{k,a_p}$ is locally constant with an explicit radius and that if we see $k \gg \vp(a_p)$ as an element of the weight space $\projlim_n \ZZ/p^{n-1} (p-1)\ZZ$, then $k \mapsto \overline{V}_{k,a_p}$ is locally constant for (most) $a_p \neq 0$. 

Let $\alpha(k)= \sum_{n \geq 1} \lfloor k/p^{n-1}(p-1) \rfloor$ so that for example $\alpha(k-1) \leq \lfloor (k-1)p/(p-1)^2 \rfloor$.

\begin{enonce*}{Theorem A}\label{locsthm}
If $\vp(a_p-a'_p) > 2 \cdot \vp(a_p) + \alpha(k-1)$, then $\overline{V}_{k,a'_p} = \overline{V}_{k,a_p}$.
\end{enonce*}

The fact that $\overline{V}_{k,a_p}$ is a locally constant function of $a_p$ had been observed by many people (Colmez, Fontaine, Kisin, Pa\v{s}k{\=u}nas, \dots). The novelty in theorem A is the explicit radius. The proof uses the theory of Wach modules and consists in showing that if one knows the Wach module for $V_{k,a_p}$ then one can deform it to a Wach module for $V_{k,a'_p}$ if $a'_p$ is sufficiently close to $a_p$. By being careful, one can get the explicit radius of theorem A (this method is the one which is sketched in \S 10.3 of \cite{BBH}; it is also used in \cite{MD}).

\begin{enonce*}{Theorem B}\label{tricsthm}
If $a_p \neq 0$ and $a_p^2 \notin p^{\ZZ}$ and $k > 3 \cdot \vp(a_p) + \alpha(k-1)+1$, then there exists $m=m(k,a_p)$ such that $\overline{V}_{k',a_p} = \overline{V}_{k,a_p}$ if $k' \geq k$ and $k'-k \in p^{m-1}(p-1)\ZZ$.
\end{enonce*}

The main result of \cite{BUG} shows for example that if $0 < \vp(a_p) < 1$ and if $k \neq 3 \bmod{p-1}$, then  $\overline{V}_{k,a_p}$ depends only on $k \bmod{p-1}$. The proof of theorem B consists in showing that the $V_{k,a_p}$'s occur in the families of trianguline representations constructed in \S 5.1 of \cite{C08}. Since Colmez excludes from his main result those representations for which (in his notations) ``$\delta_0(p) \in p^{\ZZ}$'', we have to exclude those $a_p$'s for which $a_p^2 \in p^{\ZZ}$ but this can probably be easily overcome by the motivated reader. On the other hand, the restriction $a_p \neq 0$ is essential since the conclusion of theorem B fails for $a_p=0$. In this case, the associated weight space is quite different: see \cite{LBDP} for a construction of a $p$-adic family of representations which interpolates the $V_{k,0}$.

Our second result is an algorithm which can be programmed and which, given the data of $k$ and $a_p \bmod{\pi_E^n}$, will return $\overline{V}_{k,a_p}$ if $n$ is large enough. This algorithm is based on Fontaine's theory of $(\phi,\Gamma)$-modules (see A.3 of \cite{F90}) and its refinement for crystalline representations, the theory of Wach modules (see \cite{LB6}). In order to give the statement of the result, we give a few reminders about the theory of $(\phi,\Gamma)$-modules for $k_E$-linear representations. Let $\Gamma$ be a group isomorphic to $\Zp^\times$ via a map $\chi : \Gamma \to \Zp^\times$. The field $k_E\dpar{X}$ is endowed with a $k_E$-linear frobenius $\phi$ given by $\phi(f)(X)=f(X^p)$ and an action of $\Gamma$ given by $\gamma(f)(X)=f((1+X)^{\chi(\gamma)}-1)$. A $(\phi,\Gamma)$-module (over $k_E$) is a finite dimensional $k_E\dpar{X}$-vector space endowed with a semilinear frobenius whose matrix satisfies $\Mat(\phi) \in \GL_d(k_E\dpar{X})$ in some basis and a commuting semilinear continuous action of $\Gamma$. By a theorem of Fontaine (see A.3.4 of \cite{F90}), the category of $(\phi,\Gamma)$-modules over $k_E$ is naturally isomorphic to the category of $k_E$-linear representations of $\galp$. The group $\Gamma$ is topologically cyclic (at least if $p\neq 2$) so that a $(\phi,\Gamma)$-module is determined by two matrices $P$ and $G$, the matrices of $\phi$ and of a topological generator $\gamma$ of $\Gamma$ in some basis. In the sequel, we denote by $\rep(P,G)$ the $k_E$-linear representation associated to the $(\phi,\Gamma)$-module determined by $P$ and $G$.

If $f(X) \in \OO_E \dcroc{X}$, set $\phi(f)(X)=f((1+X)^p-1)$ so that in particular, $\phi(X)=XQ$ where $Q=\Phi_p(1+X)$, and let $\Gamma$ act on $\OO_E \dcroc{X}$ by $\eta(f)(X)=f((1+X)^{\chi(\eta)}-1)$. Recall that $\gamma$ is a fixed topological generator of $\Gamma$; we write $\gamma_1=\gamma^{p-1}$ so that $\chi(\gamma_1)$ is a topological generator of $1+p\Zp$ and if $G$ is the matrix of $\gamma$ in some basis, then the matrix of $\gamma_1$ is $G_1=G \gamma(G) \cdots \gamma^{p-2}(G)$. 

\begin{enonce*}{Definition}
Let $W_{k,a_p}(n)$ be the set of pairs of matrices $(P,G)$ with $P,G \in \M_2(\OO_E \dcroc{X} /(\pi_E^n,\phi(X)^k))$ satisfying the following conditions :
\begin{enumerate}
\item $P\phi(G)=G\gamma(P)$;
\item $G=\Id\bmod{X}$;
\item $\det(P)=Q^{k-1}$ and $\Tr(P)=a_p \bmod{X}$;
\item if $\Pi(Y)=(Y-1)(Y-\chi(\gamma_1)^{k-1})$, then $\Pi(G_1) = 0 \bmod{Q}$.
\end{enumerate}
\end{enonce*}

If $(P,G) \in W_{k,a_p}(n)$, then we denote by $\overline{P}$ and $\overline{G}$ two matrices in $\M_2(k_E\dcroc{X})$ which are equal modulo $\phi(X)^k$ to the reductions modulo $\pi_E$ of $P$ and $G$ (note that in $k_E\dcroc{X}$, we have $\phi(X)=X^p$). They then satisfy the relation $\overline{P}\phi(\overline{G})=\overline{G}\gamma(\overline{P}) \bmod{\phi(X)^k}$ and in proposition \ref{extunik} below, we prove that we can modify $\overline{G}$ modulo $X^k$ so that $\overline{P}\phi(\overline{G})=\overline{G}\gamma(\overline{P})$ and that the resulting representation $\rep(\overline{P}, \overline{G})$ does not depend on the modification. 

\begin{enonce*}{Theorem C}
If $n \geq 1$, then $W_{k,a_p}(n)$ is nonempty and there exists $n(k,a_p) \geq 1$ with the property that if $n \geq n(k,a_p)$ and if $(\overline{P}, \overline{G})$ is the image of any $(P,G) \in W_{k,a_p}(n)$, then $\rep(\overline{P}, \overline{G})^{\mathrm{ss}} = \overline{V}_{k,a_p}^*$.
\end{enonce*}

This theorem suggests the following algorithm. Choose some integer $n \geq 1$; since the set $\M_2(\OO_E \dcroc{X} /(\pi_E^n,\phi(X)^k))$ is finite, we can determine all the elements of $W_{k,a_p}(n)$ by checking for each pair of matrices $(P,G)$ whether it satisfies conditions (1), (2), (3) and (4). For each pair $(P,G) \in W_{k,a_p}(n)$, we compute $\rep(\overline{P}, \overline{G})^{\mathrm{ss}}$. If we get two different $k_E$-linear representations from $W_{k,a_p}(n)$ in this way, then we replace $n$ by $n+1$; otherwise, $n=n(k,a_p)$ and $ \overline{V}_{k,a_p}^* = \rep(\overline{P}, \overline{G})^{\mathrm{ss}}$. The theorem above ensures that the algorithm terminates and returns the correct answer. In order to implement the algorithm, we need to be able to identify $\rep(\overline{P}, \overline{G})$ given $P$ and $G$ and one way of doing so is explained in \S\ref{ident}. The proof of theorem C is a simple application of the theory of Wach modules. It would be useful to have an effective bound for $n(k,a_p)$ and theorem A is probably an ingredient in the determination of such a bound.

\section{Crystalline representations and Wach modules}
\label{pgmsec}

Let $\Gamma$ be the group of the introduction and let $\calA_E$ be the $\pi_E$-adic completion of $\OO_E\dcroc{X}[1/X]$, so that $\calA_E$ is the ring of power series $f(X)=\sum_{n \in \ZZ} a_n X^n$ with $a_n \in \OO_E$ and $a_{-n} \to 0$ as $n \to + \infty$. The ring $\calA_E$ is endowed with an $\OO_E$-linear frobenius $\phi$ given by $\phi(f)(X)=f((1+X)^p-1)$ and an action of $\Gamma$ given by $\eta(f)(X)=f((1+X)^{\chi(\eta)}-1)$ for $\eta \in \Gamma$. An \'etale $(\phi,\Gamma)$-module (over $\OO_E$) is a finite type $\calA_E$-module $\dfont$ endowed with a semilinear frobenius such that $\phi(\dfont)$ generates $\dfont$ as an $\calA_E$-module, and a commuting semilinear continuous action of $\Gamma$. By a theorem of Fontaine (see A.3.4 of \cite{F90}), the category of \'etale $(\phi,\Gamma)$-modules over $\OO_E$ is naturally isomorphic to the category of $\OO_E$-linear representations of $\galp$ and we denote the corresponding functor by $\dfont \mapsto V(\dfont)$, and the inverse functor by $V\mapsto \dfont(V)$. If we restrict this equivalence of categories to objects killed by $\pi_E$, then we recover the equivalence described in the introduction. 

An effective Wach module of height $h$ is a free $\OO_E\dcroc{X}$-module $\nwach$ of finite rank, with a frobenius $\phi$ and an action of $\Gamma$ such that~:
\begin{enumerate}
\item $\calA_E \otimes_{\OO_E\dcroc{X}} \nwach$ is an \'etale $(\phi,\Gamma)$-module; 
\item $\Gamma$ acts trivially on $\nwach / X \nwach$;
\item $\nwach / \phi^*(\nwach)$ is killed by $Q^h$. 
\end{enumerate}

If $\nwach$ is a Wach module, then we can associate to it the $E$-linear representation $V(\nwach)=E \otimes_{\OO_E}ÊV(\calA_E \otimes_{\OO_E\dcroc{X}} \nwach)$. We can also define a filtration on $\nwach$ by $\Fil^j \nwach = \{ y \in \nwach$ such that $\phi(y) \in Q^j \cdot \nwach\}$ and the $E$-vector space $E \otimes_{\OO_E} \nwach / X \nwach$ then has the structure of a filtered $\phi$-module. By combining proposition III.4.2 and theorem III.4.4 of \cite{LB6}, we get the following result. 

\begin{prop}\label{wachrec}
If $\nwach$ is an effective Wach module of height $h$, then $V(\nwach)$ is crystalline with Hodge-Tate weights in $[-h;0]$ and $\dcris(V(\nwach)) \simeq E \otimes_{\OO_E} \nwach / X \nwach$. 

All crystalline representations with Hodge-Tate weights in $[-h;0]$ arise in this way. 
\end{prop}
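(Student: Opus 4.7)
The plan is to recover Proposition \ref{wachrec} by applying the main equivalence of categories for Wach modules established in \cite{LB6}, together with a direct identification of $\dcris$ with $E \otimes_{\OO_E} \nwach/X\nwach$. The crucial ingredient is Fontaine's embedding of $\OO_E\dcroc{X}$ into the ring $\mathbf{B}^+_{\mathrm{rig},E}$ via $X \mapsto [\eps]-1$, compatibly with $\phi$ and the $\Gamma$-action, inside which the crystalline period ring $\mathbf{B}^+_{\mathrm{cris}}$ naturally sits.

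For the forward direction I would start from an effective Wach module $\nwach$ of height $h$ and form the \'etale $(\phi,\Gamma)$-module $\calA_E \otimes_{\OO_E\dcroc{X}} \nwach$, which under Fontaine's equivalence corresponds to the representation $V(\nwach)$. Extending scalars to $\mathbf{B}^+_{\mathrm{cris}}$ and using the hypothesis that $\Gamma$ acts trivially on $\nwach/X\nwach$, I would produce a canonical $\Gamma$-invariant lift inside $\mathbf{B}^+_{\mathrm{cris}} \otimes \nwach$ of a given basis of $\nwach/X\nwach$, which yields the isomorphism $\dcris(V(\nwach)) \simeq E \otimes_{\OO_E} \nwach/X\nwach$ and shows in particular that $V(\nwach)$ is crystalline. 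The height condition $Q^h \nwach \subset \phi^*(\nwach)$ combined with the definition $\Fil^j \nwach = \{y : \phi(y) \in Q^j \nwach\}$ then confines the Hodge-Tate weights to $[-h,0]$. For the reverse direction, given a crystalline representation $V$ with Hodge-Tate weights in $[-h,0]$ and its \'etale $(\phi,\Gamma)$-module $\dfont(V)$ over $\calA_E$, I would construct $\nwach \subset \dfont(V)$ as the unique $\OO_E\dcroc{X}$-sublattice that is $\phi$- and $\Gamma$-stable, on which $\Gamma$ acts trivially modulo $X$, and whose quotient $\nwach/\phi^*(\nwach)$ is killed by $Q^h$. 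Existence can be obtained by lifting a basis of $\dcris(V)$ to locally analytic vectors for $\Gamma$ inside $\dfont(V)[1/p]$ and taking the $\OO_E\dcroc{X}$-span.

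The main obstacle is the uniqueness of $\nwach$, which is required both for functoriality and in order to match the two directions of the correspondence. My strategy would be successive approximation along the $X$-adic filtration: the obstruction to conjugating two candidate lattices into each other lies in $\Gamma$-cohomology with coefficients in the graded pieces $X^n \nwach/X^{n+1}\nwach$, on which $\Gamma$ acts by $\chi^n$ times a trivial action, so that $\gamma_1 - 1$ becomes invertible for $n \geq 1$ after passing to the pro-$p$ subgroup generated by $\gamma_1$. This contracting property then allows one to iteratively solve away the difference modulo higher and higher powers of $X$. All of this is worked out in detail in Proposition III.4.2 and Theorem III.4.4 of \cite{LB6}, which is what the statement of Proposition \ref{wachrec} directly invokes.
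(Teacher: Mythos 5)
The paper itself gives no proof of this proposition: it is stated as an immediate consequence of Proposition III.4.2 and Theorem III.4.4 of \cite{LB6}, and you correctly identify that citation as the real content. Your sketch of the underlying argument from \cite{LB6} is broadly faithful --- in particular the successive-approximation mechanism using the contracting action of $\gamma_1-1$ on $X^n\nwach/X^{n+1}\nwach$ for $n\geq 1$ is indeed what drives both the construction of the canonical $\Gamma$-equivariant section onto $\nwach/X\nwach$ (hence the $\dcris$ identification) and the uniqueness of $\nwach$ inside $\dfont(V)$. One small inaccuracy: $\mathbf{B}^+_{\mathrm{cris}}$ does not sit inside the ring you denote $\mathbf{B}^+_{\mathrm{rig},E}$ (a ring of power series in $X$ over $E$ with bounded coefficients); the comparison with $\dcris$ instead passes through a larger period ring such as $\widetilde{\mathbf{B}}^+_{\mathrm{rig}}$ into which both $\OO_E\dcroc{X}$ (via $X\mapsto[\eps]-1$) and $\mathbf{B}^+_{\mathrm{cris}}$ embed, or equivalently through the overconvergent $(\phi,\Gamma)$-module. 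Since the paper's ``proof'' is nothing more than the citation, this imprecision does not affect the verdict: your proposal matches the paper's approach.
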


The matrix of $\phi$ gives a well-defined equivalence class in $\M_d(E \otimes_{\OO_E} \OO_E\dcroc{X})$ and we have the following result, which follows from \S III.3 of \cite{LB6}.

\begin{prop}\label{wtfil}
If $\nwach$ is an effective Wach module, then the elementary divisors in the ring $E \otimes_{\OO_E} \OO_E\dcroc{X}$ of the matrix of $\phi$ are the ideals generated by $Q^{h_1},\hdots,Q^{h_d}$ where $h_1,\hdots,h_d$ are the opposites of the Hodge-Tate weights of $V(\nwach)$.
\end{prop}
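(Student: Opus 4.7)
The approach is to diagonalize the matrix of $\phi$ by Smith normal form over the ring $R := E \otimes_{\OO_E} \OO_E\dcroc{X}$, and then identify the diagonal entries with (the opposites of) the Hodge--Tate weights via the Wach filtration. Set $M := E \otimes_{\OO_E} \nwach$, a free $R$-module of rank $d$.

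After base change to $R$, the cokernel of $\phi : \phi^* M \to M$ is a finitely generated torsion $R$-module killed by $Q^h$, supported at the finitely many closed points of $\operatorname{Spec}(R)$ cut out by $Q$, at each of which $R$ is locally a DVR. Applying Smith normal form yields bases $(f_1, \ldots, f_d)$ and $(e_1, \ldots, e_d)$ of $M$ and integers $0 \leq m_1 \leq \cdots \leq m_d$ with $\phi(f_j) = Q^{m_j} e_j$; the $(Q^{m_j})$ are then the elementary divisors of the matrix of $\phi$. It remains to identify the multiset $\{m_j\}$ with $\{h_j\}$.

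One inclusion is immediate: since $\phi(f_j) = Q^{m_j} e_j \in Q^{m_j} M$, we have $f_j \in \Fil^{m_j} M := \{y \in M : \phi(y) \in Q^{m_j} M\}$, so the image $\bar f_j$ in $\dcris(V(\nwach)) = M/XM$ lies in $\Fil^{m_j} \dcris$, giving $\dim_E \Fil^l \dcris \geq \#\{j : m_j \geq l\}$. For the reverse, the key lemma is the identity $v_Q(\phi(c)) = v_X(c)$ for $c \in R$: writing $c = X^n u$ with $u(0) \neq 0$, one has $\phi(c) = X^n Q^n \phi(u)$, and evaluating $\phi(u)(X) = u((1+X)^p - 1)$ at $X = 0$ or at $X = \zeta_p - 1$ both yield $u(0) \neq 0$, so neither $X$ nor $Q$ divides $\phi(u)$.

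Now take $y = \sum_j c_j f_j \in \Fil^l M$; the condition $\phi(y) = \sum_j \phi(c_j) Q^{m_j} e_j \in Q^l M$ forces $v_Q(\phi(c_j)) \geq l - m_j$, whence $v_X(c_j) \geq l - m_j$, and in particular $c_j f_j \in XM$ whenever $m_j < l$. Thus $\bar y$ lies in the $E$-span of $\{\bar f_j : m_j \geq l\}$, giving the reverse inclusion. Combining, $\dim_E \Fil^l \dcris = \#\{j : m_j \geq l\}$ for all $l$, which identifies the multiset $\{m_j\}$ with that of the opposites of the Hodge--Tate weights of $V(\nwach)$. The main difficulty is the valuation identity $v_Q \circ \phi = v_X$; once this is in hand, the matching between the Smith normal form and the Wach filtration is essentially bookkeeping.
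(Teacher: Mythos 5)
Your overall strategy — reduce to a Smith normal form over $R := E \otimes_{\OO_E} \OO_E\dcroc{X}$, then read off the filtration jumps via the valuation identity $v_Q(\phi(c)) = v_X(c)$ — is a reasonable and more self-contained approach than the paper's, which simply cites \S III.3 of \cite{LB6}. The ring $R = \OO_E\dcroc{X}[1/p]$ is indeed a PID, your computation that $v_Q(\phi(c)) = v_X(c)$ is correct, and the ``bookkeeping'' in the last paragraph would be fine if the first step were available. Unfortunately the first step contains a genuine gap.

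The gap is in the sentence ``Applying Smith normal form yields bases $(f_1,\dots,f_d)$ and $(e_1,\dots,e_d)$ of $M$ $\dots$ with $\phi(f_j)=Q^{m_j}e_j$.'' Smith normal form applies to the \emph{$R$-linear} map $1\otimes\phi : \phi^*M \to M$; it produces a basis $(g_j)$ of $\phi^*M$ and a basis $(e_j)$ of $M$ with $(1\otimes\phi)(g_j)=Q^{m_j}e_j$. To convert this into the statement you use, one must realise each $g_j$ as $1\otimes f_j$ for some $R$-basis $(f_j)$ of $M$. But the map $y \mapsto 1\otimes y$ is $\phi$-semilinear, so the bases of $\phi^*M$ of the form $(1\otimes f_j)$ differ from a fixed one $(1\otimes v_i)$ by change-of-basis matrices lying in $\phi(\GL_d(R))$, which is a \emph{proper} subgroup of $\GL_d(R)$ (for instance $1+X/p$ is a unit of $R$ not in the image of $\phi$). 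Equivalently, what you need is a decomposition $P = A\cdot \mathrm{diag}(Q^{m_j})\cdot\phi(B)$ with $A,B \in \GL_d(R)$, whereas SNF only furnishes $P = U\cdot \mathrm{diag}(Q^{m_j})\cdot V$ with $U,V\in\GL_d(R)$; there is no reason one can take $V\in\phi(\GL_d(R))$. Already for the rank-two Wach module attached to $V^*_{k,a_p}$ with matrix $P=\smat{0 & -Q^{k-1}\\ 1 & \alpha}$, $\Tr\alpha=a_p$, trying to produce $f_2$ by hand reduces to solving $\phi(c)\equiv -\alpha \bmod Q^{k-1}$, and the map $c\mapsto\phi(c)\bmod Q^{k-1}$ is injective from $R/X^{k-1}$ (dimension $k-1$) into $R/Q^{k-1}$ (dimension $(k-1)(p-1)$), hence far from surjective.

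Both halves of your filtration computation lean on this step: the forward inclusion needs the $\bar f_j$ to be linearly independent in $M/XM$ (so $(f_j)$ must be a basis of $M$, not of $\phi^*M$), and the reverse inclusion expands $\phi(y)=\sum\phi(c_j)Q^{m_j}e_j$, which again uses $\phi(f_j)=Q^{m_j}e_j$ with $f_j\in M$. So the gap propagates. Your closing remark that ``the main difficulty is the valuation identity'' inverts the situation: the valuation identity is the easy part, while producing an $R$-basis of $M$ adapted to the elementary divisors in this semilinear sense is the real content, and in the literature it is obtained by exploiting the extra structure on a Wach module (notably the $\Gamma$-action and its relation to the de Rham filtration, which is what \S III.3 of \cite{LB6} uses), not by bare Smith normal form.
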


Recall that $\OO_E\dcroc{X} / Q \simeq \Zp[\zeta_p]$. The $\Qp(\zeta_p)$-vector space $E \otimes_{\OO_E}\nwach / Q \nwach$ is endowed with an action of $\Gamma$ and by propositions III.2.1 and III.2.2 of \cite{LB6}, we have the following.

\begin{prop}\label{wawe}
If $\nwach$ is an effective Wach module, if $V(\nwach)$ is the associated representation viewed as a $\Qp$-linear representation, and if $\eta \in \Gamma$ is such that $\chi(\eta) \in 1+p\Zp$, then there exists a basis of $\Qp \otimes_{\Zp}\nwach / Q \nwach$ over $\Qp(\zeta_p)$ in which the matrix of $\eta$ is diagonal and its coefficients on the diagonal are the $\chi(\eta)^{h_j}$ where $h_1,\hdots,h_d$ are the opposites of the Hodge-Tate weights of $V(\nwach)$. 
\end{prop}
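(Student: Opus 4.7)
My plan is to exploit the elementary-divisor form of $\phi$ from Proposition~\ref{wtfil}, together with the commutation $\phi\eta = \eta\phi$ and a careful computation of how $\eta$ acts on powers of $Q$. The first ingredient is the identity $\eta(Q) = u_\eta \cdot Q$ for some unit $u_\eta \in \OO_E\dcroc{X}^\times$ satisfying $u_\eta(0) = 1$ and $u_\eta \equiv \chi(\eta) \pmod{Q}$. This follows from the explicit formula $\eta(Q) = \Phi_p((1+X)^{\chi(\eta)})$ combined with $(1+X)^p = 1 + XQ$ and the hypothesis $\chi(\eta) \in 1 + p\Zp$; the congruence modulo $Q$ is obtained by evaluating in $\OO_E\dcroc{X}/Q = \OO_E[\zeta_p]$ via $X \mapsto \zeta_p - 1$, where one checks that $\eta$ acts on the tangent space at $\zeta_p - 1$ by multiplication by $\chi(\eta)$.

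Next I apply Proposition~\ref{wtfil}: after tensoring with $E$, fix bases $(e_j)_{j=1}^d$ and $(f_j)_{j=1}^d$ of $E \otimes \nwach$ in which $\phi(e_j) = Q^{h_j} f_j$, ordered with $h_1 \leq \cdots \leq h_d$. Writing $\eta(e_j) = \sum_i A_{ij} e_i$ and $\eta(f_j) = \sum_i B_{ij} f_i$, the commutation $\phi\eta = \eta\phi$ together with the identity above yields the relation $\phi(A_{ij}) \cdot Q^{h_i} = u_\eta^{h_j} \cdot Q^{h_j} \cdot B_{ij}$ for all $i,j$. Using the characterization $Q^k \mid \phi(A) \iff X^k \mid A$ in $\OO_E \dcroc{X}$ (a straightforward induction based on $\phi(X) = XQ$), this forces $X^{h_j - h_i} \mid A_{ij}$ whenever $h_j > h_i$. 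Passing to $\nwach/Q\nwach$ and combining this divisibility with the Wach module normalization $A(0) = B(0) = \mathrm{Id}$ (coming from condition (2) of a Wach module) and the congruence $u_\eta \equiv \chi(\eta) \pmod Q$, an induction on the weight shows that the characteristic polynomial of $\eta$ on $\Qp(\zeta_p) \otimes \nwach/Q\nwach$ is $\prod_j (T - \chi(\eta)^{h_j})$. When $\chi(\eta) = 1$ the statement is trivial; otherwise $\chi(\eta)$ has infinite order in $1 + p\Zp$, so distinct values of $h_j$ give distinct eigenvalues, and one splits the filtration by inverting $\eta - \chi(\eta)^{h} \cdot \mathrm{Id}$ on higher-weight subspaces to produce the desired diagonal basis.

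The main obstacle will be this final step: pinning down that the eigenvalues are exactly $\chi(\eta)^{h_j}$ (in the sign convention of the paper) requires careful bookkeeping, combining the order-of-vanishing estimates for $A_{ij}$ with the two Wach module normalizations and the congruence $u_\eta \equiv \chi(\eta) \pmod Q$ at each inductive step through the weight filtration; the interaction between the two bases $(e_j)$ and $(f_j)$ and the asymmetry between the roles of $X$ and $Q$ make this step considerably more delicate than it looks at first glance. Repeated eigenvalues (when several $h_j$'s coincide) do not cause any real trouble: they lead to joint eigenspaces of the appropriate dimension, within which any basis will do.
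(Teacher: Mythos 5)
The paper does not prove this proposition; it cites Propositions III.2.1 and III.2.2 of \cite{LB6} directly, so there is no internal proof to compare against. Your proposed reconstruction uses exactly the right ingredients --- the identity $\eta(Q)=u_\eta Q$ with $u_\eta\equiv\chi(\eta)\bmod Q$ (which is correct: writing $v=\eta(X)/X$ one has $u_\eta=\phi(v)/v$, $v\equiv 1\bmod Q$ since $\zeta_p^{\chi(\eta)}=\zeta_p$, and $\phi(v)\equiv v(0)=\chi(\eta)\bmod Q$), the elementary-divisor form $\phi(e_j)=Q^{h_j}f_j$, and the commutation relation $\phi(A_{ij})Q^{h_i}=u_\eta^{h_j}Q^{h_j}B_{ij}$. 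The strategy is sound.

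There is, however, a genuine gap in the step where you triangularize. From the commutation relation you extract $X^{h_j-h_i}\mid A_{ij}$ when $h_j>h_i$, but this is the wrong divisibility to feed into the reduction mod $Q$: after tensoring with $\Qp$, the image of $X$ in $\Qp(\zeta_p)$ is $\zeta_p-1$, which is invertible, so $X$-divisibility of $A_{ij}$ gives no constraint whatever on $\overline{A}_{ij}\in\Qp(\zeta_p)$. The useful consequence of the same relation is the \emph{other} case: when $h_i>h_j$ one gets $Q^{h_i-h_j}\phi(A_{ij})=u_\eta^{h_j}B_{ij}$, hence $Q^{h_i-h_j}\mid B_{ij}$ since $u_\eta$ is a unit. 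Ordering $h_1\leq\cdots\leq h_d$, this makes $\overline{B}$ (the matrix of $\eta$ in the $f$-basis) upper triangular modulo $Q$, and for the diagonal the relation $\phi(A_{ii})\equiv u_\eta^{h_i}B_{ii}\bmod Q$ together with $A_{ii}(0)=1$ determines $\overline{B}_{ii}$ outright. No delicate induction is needed; the delicacy you flag at the end is an artifact of working with $A$ instead of $B$.

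Once you carry out that step, you should also revisit the sign. The computation just described gives $A_{ii}(0)=\chi(\eta)^{h_i}\overline{B}_{ii}$, i.e.\ $\overline{B}_{ii}=\chi(\eta)^{-h_i}$, so the characteristic polynomial comes out as $\prod_j\left(T-\chi(\eta)^{-h_j}\right)$ rather than $\prod_j\left(T-\chi(\eta)^{h_j}\right)$. A rank-one sanity check confirms this: for $\nwach=\OO_E\dcroc{X}\,e$ with $\phi(e)=Q^h e$, which is the effective Wach module of $\Qp(-h)$ (Hodge--Tate weight $-h$, so $h_1=h$), solving $\phi(\lambda)=u_\eta^h\lambda$ with $\lambda(0)=1$ gives $\lambda=\chi(\eta)^{-h}(\eta(X)/X)^h$, whose image mod $Q$ is $\chi(\eta)^{-h}$, consistent with Sen theory. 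So your sketch asserts a conclusion that your own displayed relation does not produce; you need to either locate an error in the computation or reconcile the sign convention with the statement of Propositions III.2.1--III.2.2 of \cite{LB6} before the argument is complete.
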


If $V(\nwach)$ is an $E$-linear representation with Hodge-Tate weights $h_1,\hdots,h_d$ then the Hodge-Tate weights of the underlying $\Qp$-linear representations are the $h_i$'s each counted $[E:\Qp]$ times; in particular, $\prod_{i=1}^d (\gamma_1-\chi(\gamma_1)^{h_i})=0$ on $E \otimes_{\OO_E}\nwach / Q \nwach$ where $\gamma_1=\gamma^{p-1}$.

\section{Local constancy with respect to $a_p$}\label{lcstap}

In this section, we give a proof of theorem A. The main idea is to deform a Wach module, and in order to do this we need to prove a few ``matrix modification'' results.

\begin{lemm}\label{chgbs}
If $P_0 \in \M_2(\OO_E)$ is a matrix with eigenvalues $\lambda \neq \mu$ and $\delta=\lambda-\mu$, then there exists $Y \in \M_2(\OO_E)$ such that $Y^{-1} \in \delta^{-1}  \M_2(\OO_E)$ and $Y^{-1}P_0Y = \smat{ \lambda & 0 \\ 0 & \mu}$.
\end{lemm}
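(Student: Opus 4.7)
The plan is to define $Y$ as the matrix whose columns are primitive integral eigenvectors of $P_0$ for $\lambda$ and $\mu$, and then to bound the denominators of $Y^{-1}$ by $\delta$ via a direct computation.

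First, let $L_\lambda := \ker(P_0 - \lambda \Id) \cap \OO_E^2$. This is a finitely generated torsion-free module over the PID $\OO_E$, hence free, and its $E$-rank equals $1$ since $\lambda$ is a simple eigenvalue of $P_0$. Choose a generator $v_\lambda$ of $L_\lambda$; it is automatically \emph{primitive} in $\OO_E^2$, in the sense that $v_\lambda \notin \pi_E \OO_E^2$, since otherwise $v_\lambda/\pi_E$ would also lie in $L_\lambda$. Construct $v_\mu$ in the same way and set $Y = [v_\lambda \mid v_\mu] \in \M_2(\OO_E)$. Because $v_\lambda$ and $v_\mu$ belong to distinct eigenspaces over $E$, the matrix $Y$ is invertible over $E$, and by construction $Y^{-1} P_0 Y = \smat{\lambda & 0 \\ 0 & \mu}$.

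The main step is to bound $Y^{-1}$. For $j \in \{1,2\}$, write the $j$-th column of $Y^{-1}$ as $\smat{c_{\lambda,j} \\ c_{\mu,j}}$, which is equivalent to the identity $e_j = c_{\lambda,j} v_\lambda + c_{\mu,j} v_\mu$ in $E^2$. Applying $P_0 - \mu \Id$ to this identity yields $c_{\lambda,j} \delta v_\lambda$ on the right, while the left-hand side is the $j$-th column of $P_0 - \mu \Id$, which lies in $\OO_E^2$. Since $v_\lambda$ is primitive in $\OO_E^2$, this forces $c_{\lambda,j} \delta \in \OO_E$, i.e.\ $c_{\lambda,j} \in \delta^{-1} \OO_E$. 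The symmetric computation using $P_0 - \lambda \Id$ gives $c_{\mu,j} \in \delta^{-1} \OO_E$, whence $Y^{-1} \in \delta^{-1} \M_2(\OO_E)$.

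The only real subtlety is the choice of integral eigenvectors: a naive choice such as a nonzero column of $P_0 - \mu \Id$ can itself be divisible by $\delta$ (for instance when $P_0$ is already diagonal but $\lambda$ and $\mu$ have positive valuation), producing a $Y$ with $\det Y$ divisible by $\delta^2$ rather than only by $\delta$, and thus an inverse whose denominators are too large. Insisting on \emph{primitive} generators of $L_\lambda$ and $L_\mu$ is precisely what yields the bound $\delta^{-1}$.
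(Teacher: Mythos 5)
Your argument is correct and matches the paper's: both proofs pick primitive integral eigenvectors $v,w$ for $\lambda,\mu$, set $Y=[v\mid w]$, and deduce the bound on $Y^{-1}$ from the identity $(P_0-\mu\Id)e_j = \delta\,c_{\lambda,j}v$ (equivalently, solving $x=\alpha v+\beta w$ via $\alpha v=(f(x)-\mu x)/\delta$) together with primitivity. You spell out the existence of primitive eigenvectors (as generators of $\ker(P_0-\lambda\Id)\cap\OO_E^2$) and the final bound in somewhat more detail than the paper, but the approach is the same.
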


\begin{proof}
The matrix $P_0$ corresponds to an endomorphism $f$ of an $E$-vector space such that $f$ preserves some lattice $M$. Let $v$ and $w$ be two eigenvectors for the eigenvalues $\lambda$ and $\mu$ such that $v$ and $w$ are in $M$ but not in $\pi_E M$. If $x \in M$, then we can write $x = \alpha v + \beta w$ so that $f(x) = \alpha \lambda v + \beta \mu w$ and solving for $\alpha v$ and $\beta w$ shows that they belong to $\delta^{-1} M$. The lemma follows by taking for $Y$ the matrix of $\{v,w\}$.
\end{proof}

\begin{coro}\label{exhz}
If $\alpha \geq 0$ and $\eps \in \OO_E$ are such that $\vp(\eps) \geq 2 \vp(\delta)+\alpha$, then there exists $H_0 \in p^\alpha \M_2(\OO_E)$ such that $\det(\Id+H_0)=1$ and $\Tr(H_0P_0)=\eps$.
\end{coro}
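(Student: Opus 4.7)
The plan is to reduce to the case where $P_0$ is diagonal using lemma \ref{chgbs}, and then write down a suitable $K$ in the diagonal basis. Pick $Y\in\M_2(\OO_E)$ with $Y^{-1}\in\delta^{-1}\M_2(\OO_E)$ and $Y^{-1}P_0Y=\smat{\lambda & 0 \\ 0 & \mu}$. For any matrix $K$, the ansatz $H_0=YKY^{-1}$ satisfies $\det(\Id+H_0)=\det(\Id+K)$ and $\Tr(H_0 P_0)=\Tr(K\cdot Y^{-1}P_0Y)=\Tr(K\smat{\lambda & 0 \\ 0 & \mu})$. So the problem reduces to producing a $K$ with $\det(\Id+K)=1$, with $\Tr(K\smat{\lambda & 0 \\ 0 & \mu})=\eps$, and with entries small enough that $YKY^{-1}$ still lands in $p^\alpha\M_2(\OO_E)$ despite the denominator $\delta^{-1}$ lurking in $Y^{-1}$.

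The hypothesis dictates the required size of $K$. Since $Y$ has entries of $\vp$-valuation $\geq 0$ and $Y^{-1}$ of $\vp$-valuation $\geq-\vp(\delta)$, it suffices to arrange that each entry of $K$ has $\vp$-valuation $\geq \vp(\delta)+\alpha$. Setting $a=\eps/\delta$, the assumption $\vp(\eps)\geq 2\vp(\delta)+\alpha$ is precisely $\vp(a)\geq\vp(\delta)+\alpha$. The cleanest candidate I would use is the rank-one matrix
\[
K = a\smat{1 & -1 \\ 1 & -1},
\]
for which $\Tr(K)=0$ and $\det(K)=0$, hence $\det(\Id+K)=1+\Tr(K)+\det(K)=1$; and $\Tr(K\smat{\lambda & 0 \\ 0 & \mu})=a\lambda-a\mu=a\delta=\eps$. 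The integrality $H_0=YKY^{-1}\in p^\alpha\M_2(\OO_E)$ then follows entry by entry from the valuation bookkeeping above.

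The only real difficulty is the guess for $K$: the two conditions $\det(\Id+K)=1$ and $\Tr(K\smat{\lambda & 0 \\ 0 & \mu})=\eps$ form an underdetermined system, and one needs the free parameters chosen so that every entry of $K$ inherits the common minimum valuation $\vp(a)$; otherwise the denominator from $Y^{-1}$ would spoil the integrality of $H_0$. The rank-one, trace-zero, determinant-zero choice above is the simplest way to achieve this, and once it is written down the rest of the argument is routine.
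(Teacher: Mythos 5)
Your proof is correct and is essentially identical to the paper's: both conjugate $P_0$ to diagonal form via Lemma \ref{chgbs}, take $H_0 = Y \smat{y & -y \\ y & -y} Y^{-1}$ with $y = \eps/\delta$, observe that $\det(\Id+H_0)=1$ and $\Tr(H_0P_0)=y\delta=\eps$, and check integrality via the same valuation bookkeeping.
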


\begin{proof}
If $y \in \OO_E$, let $H_0 = Y \smat{y & -y \\ y & -y} Y^{-1}$ so that $\det(\Id+H_0)=1$ and $\Tr(H_0P_0)=y\delta$. If $\vp(y) \geq \vp(\delta)+\alpha$, then $H_0 \in p^\alpha \M_2(\OO_E)$ so that we can have $\Tr(H_0P_0) = \eps$ with $y \in \OO_E$ as soon as $\vp(\eps) \geq 2 \vp(\delta)+\alpha$.
\end{proof}

Let $\gamma$ be a topological generator of $\Gamma$ so that \[ \alpha(k) = \vp\left((1-\chi(\gamma))(1-\chi(\gamma)^2)\cdots(1-\chi(\gamma)^k)\right). \] The following two propositions appear already in \S 10.3 of \cite{BBH}. 

\begin{prop}\label{csth}
If $G \in \Id + X \M_d(\OO_E\dcroc{X})$ and $k \geq 2$ and $H_0 \in p^{\alpha(k-1)} \M_2(\OO_E)$, then there exists $H \in \M_d(\OO_E\dcroc{X})$ such that $H(0)=H_0$ and $H G = G \gamma(H) \bmod{X^k}$.
\end{prop}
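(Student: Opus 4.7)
The plan is to construct $H$ as a polynomial $H = \sum_{j=0}^{k-1} X^j H_j$ of degree less than $k$, with $H_0$ the prescribed initial value and $H_1, H_2, \ldots, H_{k-1} \in \M_2(\OO_E)$ determined recursively from the equation $HG \equiv G\gamma(H) \pmod{X^k}$.

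First I would write out the equation at degree $j$. Expand $G = \Id + \sum_{l\geq 1} X^l G_l$ with $G_l \in \M_2(\OO_E)$, and observe that $\gamma(X) = (1+X)^{\chi(\gamma)}-1 \in X\Zp\dcroc{X}$ has $\gamma(X)^l = \chi(\gamma)^l X^l + O(X^{l+1})$ with coefficients in $\Zp \subset \OO_E$. Comparing coefficients of $X^j$ for $j \geq 1$, the contribution of $H_j$ itself comes from $X^j H_j \cdot \Id$ on the left and $\Id \cdot H_j \gamma(X)^j$ on the right, giving $(1-\chi(\gamma)^j) H_j$; every other term involves some $H_i$ with $i < j$ multiplied by a matrix coefficient of $G$ or an integer coefficient of $\gamma(X)^l$, all in $\OO_E$. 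The equation at degree $j$ thus takes the form
\[
(1 - \chi(\gamma)^j) H_j = C_j(H_0, \ldots, H_{j-1}; G),
\]
where $C_j$ is an $\OO_E$-linear combination of the $H_i$, $i<j$.

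Next I would solve the recursion and control valuations. Since $\gamma$ is a topological generator of $\Gamma$, $\chi(\gamma)^j \neq 1$ for $j \geq 1$, so $H_j$ is determined in $\M_2(E)$. Set $\beta_j = \vp\bigl(\prod_{l=1}^{j}(1-\chi(\gamma)^l)\bigr) = \sum_{l=1}^{j}\vp(1-\chi(\gamma)^l)$, so that $\beta_{k-1} = \alpha(k-1)$ by the identity recalled just before the proposition. I would prove by induction on $j$ that $H_j \in p^{\alpha(k-1) - \beta_j}\M_2(\OO_E)$; the base case $j=0$ is the hypothesis. For the inductive step, the $\OO_E$-linearity of $C_j$ in the $H_i$ gives $C_j \in p^{\alpha(k-1) - \beta_{j-1}}\M_2(\OO_E)$ (since the minimum valuation among $H_0, \ldots, H_{j-1}$ is $\alpha(k-1) - \beta_{j-1}$, $\beta$ being increasing), and dividing by $1 - \chi(\gamma)^j$ loses exactly $\vp(1-\chi(\gamma)^j)$, producing the desired bound $\alpha(k-1) - \beta_j$. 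For $j \leq k-1$ we have $\beta_j \leq \beta_{k-1} = \alpha(k-1)$, so every $H_j$ lies in $\M_2(\OO_E)$, and the resulting $H$ satisfies $HG \equiv G\gamma(H) \pmod{X^k}$ with $H(0) = H_0$.

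The main obstacle is simply the bookkeeping of valuations: the initial budget $p^{\alpha(k-1)}$ on $H_0$ is tight, designed to be exactly consumed by the denominators $(1-\chi(\gamma)^j)^{-1}$ accumulated over $j = 1, \ldots, k-1$. This is precisely why the hypothesis involves $\alpha(k-1)$. The rest of the argument is a formal expansion; the only thing to verify carefully is that no additional denominators appear, which holds because both the coefficients of $G$ and the coefficients of $\gamma(X)^l$ are integral.
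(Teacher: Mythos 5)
Your proof is correct and matches the paper's argument: both truncate $H$ at degree $k-1$, compare coefficients of $X^j$ in $HG = G\gamma(H)$ to get a recursion $(1-\chi(\gamma)^j)H_j = C_j(H_0,\ldots,H_{j-1})$ with integral coefficients, and track the valuation loss via the same induction, using that the budget $\alpha(k-1)$ is exactly $\vp\bigl(\prod_{l=1}^{k-1}(1-\chi(\gamma)^l)\bigr)$. The bookkeeping you carry out explicitly (the increasing sums $\beta_j$) is identical in content to the paper's statement that $H_r \in \bigl((1-\chi(\gamma))\cdots(1-\chi(\gamma)^r)\bigr)^{-1}p^{\alpha(k-1)}\M_d(\OO_E)$.
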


\begin{proof}
Write $H=H_0+XH_1+\cdots+X^{k-1} H_{k-1}$ and $G=\Id+XG_1+\cdots$. We prove by induction on $r\geq 0$ that $H_r \in ((1-\chi(\gamma))\cdots(1-\chi(\gamma)^r))^{-1} p^{\alpha(k-1)} \M_d(\OO_E)$. If $r=0$ this is the hypothesis and if $r \geq 1$, then looking at the coefficient of $X^r$ in the equation $H G = G \gamma(H) \bmod{X^k}$ shows that $(1-\chi(\gamma)^r)H_r \in ((1-\chi(\gamma))\cdots(1-\chi(\gamma)^{r-1}))^{-1} p^{\alpha(k-1)} \M_d(\OO_E)$ which completes the induction.
\end{proof}

\begin{prop}\label{mmodr}
Let $G \in \Id + X \M_d(\OO_E\dcroc{X})$ and $P \in \M_d(\OO_E\dcroc{X})$ satsify $\det(P)=Q^{k-1}$ and $P\phi(G) = G \gamma(P)$. 

If $H_0 \in p^{\alpha(k-1)} \M_2(\OO_E)$, then there exists $G' \in \Id + X \M_d(\OO_E\dcroc{X})$ and $H \in  \M_d(\OO_E\dcroc{X})$ with $H(0)=H_0$ such that if $P'=(\Id+H)P$, then $P'\phi(G') = G' \gamma(P')$.
\end{prop}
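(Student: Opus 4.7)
The plan is to apply Proposition~\ref{csth} to find an $H$ with $H(0)=H_0$ that commutes with $G$ only modulo $X^k$, and then to correct $G$ by a term in $X^k\M_d(\OO_E\dcroc{X})$ to restore exact commutation with $P'=(\Id+H)P$. Concretely, by Proposition~\ref{csth} I would pick $H\in\M_d(\OO_E\dcroc{X})$ with $H(0)=H_0$ and $HG\equiv G\gamma(H)\pmod{X^k}$, and write $HG-G\gamma(H)=X^kE$ with $E\in\M_d(\OO_E\dcroc{X})$. Integrality of $H$, and hence of $E$, is exactly what the hypothesis $H_0\in p^{\alpha(k-1)}\M_2(\OO_E)$ buys via the estimates from the proof of Proposition~\ref{csth}.

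Next, I would look for $G'$ of the form $G'=G+X^kS$ with $S\in\M_d(\OO_E\dcroc{X})$; such a $G'$ automatically lies in $\Id+X\M_d(\OO_E\dcroc{X})$. Substituting this ansatz into $P'\phi(G')=G'\gamma(P')$, expanding $\phi(X^kS)=(XQ)^k\phi(S)$, and cancelling using $P\phi(G)=G\gamma(P)$, the requirement collapses to the linear equation
\[ S\,\gamma(\Id+H)\,\gamma(P) - (\Id+H)\,P\,Q^k\,\phi(S) = E\,\gamma(P). \]

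I would then solve this linear equation for $S$ by a Neumann series. Setting $A=\gamma(\Id+H)\,\gamma(P)$ and right-multiplying (formally) by $A^{-1}$, the equation becomes $S=T_0+L(S)$ with $T_0=E\,\gamma(\Id+H)^{-1}\in\M_d(\OO_E\dcroc{X})$ and $L(S)=(\Id+H)\,P\,Q^k\,\phi(S)\,A^{-1}$. An elementary computation using the binomial expansion of $(1+X)^{\chi(\gamma)}$ shows that $\gamma(Q)/Q\in(\OO_E\dcroc{X})^{\times}$, so $Q^k/\gamma(Q)^{k-1}$ equals $Q$ times a unit, and $L$ sends $\M_d(\OO_E\dcroc{X})$ into $Q\M_d(\OO_E\dcroc{X})$. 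Iterating, $L^n(T_0)$ is divisible by $Q\phi(Q)\cdots\phi^{n-1}(Q)=\phi^n(X)/X=((1+X)^{p^n}-1)/X$, whose $X^m$-coefficient equals $\binom{p^n}{m+1}$ with $p$-adic valuation $n-\vp(m+1)$. Consequently each $X$-coefficient of $L^n(T_0)$ has valuation at least $n-\lfloor\log_p(m+1)\rfloor$, which tends to infinity with $n$ for each fixed $m$; the series $S=\sum_{n\geq 0}L^n(T_0)$ therefore converges coefficient-by-coefficient to an element of $\M_d(\OO_E\dcroc{X})$.

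The hardest part will be the cyclotomic identity $\gamma(Q)/Q\in(\OO_E\dcroc{X})^{\times}$: it is what supplies the factor of $Q$ in $L$ and drives the $p$-adic convergence of the iteration. Once this is established, the rest reduces to formal manipulation and integrality bookkeeping inherited from Proposition~\ref{csth}.
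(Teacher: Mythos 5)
Your proof is correct and takes a genuinely different route from the paper's. Both start from Proposition~\ref{csth}, which supplies $H$ with $H(0)=H_0$ and $HG-G\gamma(H)=X^kE$ with $E\in\M_d(\OO_E\dcroc{X})$. But the paper then runs an $X$-adic successive-approximation scheme: starting from $G'_k=G$, it adds a \emph{constant} matrix $X^jS_j$ at each step $j\geq k$ to kill the coefficient of $X^j$ in the error $G'_j-P'\phi(G'_j)\gamma(P')^{-1}$, and the existence of each $S_j\in\M_d(\OO_E)$ comes from the invertibility of $S\mapsto S-p^{j-k+1}P'(0)\,S\,(Q^{k-1}\gamma(P')^{-1})(0)$, which is $\Id$ minus a $p$-divisible operator because $j-k+1\geq 1$. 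You instead collapse the whole correction into the ansatz $G'=G+X^kS$ with $S$ a full power-series matrix, derive the single linear equation $S\gamma(\Id+H)\gamma(P)-(\Id+H)PQ^k\phi(S)=E\gamma(P)$, and solve it by Neumann series $S=\sum_{n\geq0}L^n(T_0)$; convergence is coefficient-by-coefficient in $\OO_E$ (which is $\pi_E$-adically complete), driven by the fact that $Q^k\gamma(P)^{-1}$ is $Q$ times a unit matrix, whence $L^n(T_0)\in Q\phi(Q)\cdots\phi^{n-1}(Q)\M_d=(\phi^n(X)/X)\M_d$. In effect the paper runs two nested iterations (an outer $X$-adic one, and at each step an implicit $p$-adic inversion of a unipotent operator on $\M_d(\OO_E)$), while yours is a single $p$-adic iteration; the identity $\gamma(Q)/Q\in(\OO_E\dcroc{X})^\times$ is made explicit and load-bearing in your version, whereas the paper uses it silently to know that $Q^{k-1}\gamma(P')^{-1}$ is integral. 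Two small points to flag: (i) both proofs, yours and the paper's, tacitly need $\det(\Id+H_0)\in\OO_E^\times$ so that $\gamma(\Id+H)^{-1}$ (and hence $T_0$, $\gamma(P')^{-1}$, etc.) is integral — this is not literally forced by $H_0\in p^{\alpha(k-1)}\M_2(\OO_E)$ when $\alpha(k-1)=0$, but it does hold for the $H_0$ produced by Corollary~\ref{exhz}, which is the only case used; (ii) you should say a word about why $L$ is continuous for the product topology (the $X^m$-coefficient of $L(S)$ depends only on the coefficients of $S$ of degree $\leq m$, since $\phi(S)$ and multiplication by fixed power series have this property), so that passing to the limit in $S_N-L(S_N)=T_0-L^{N+1}(T_0)$ is legitimate.
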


\begin{proof}
Let $H$ be the matrix constructed in proposition \ref{csth}  and let $G'_k=G$ so that we have $G_k'-P'\phi(G'_k)\gamma(P')^{-1} = X^k R_k \in X^k \M_d(\OO_E\dcroc{X})$. Assume that $j \geq k$ and that we have a matrix $G'_j$ such that \[ G'_j-P'\phi(G'_j)\gamma(P')^{-1} = X^j R_j \in X^j \M_d(\OO_E \dcroc{X}). \] If $S_j \in \M_d(\OO_E)$ and if we set $G'_{j+1}=G'_j+X^j S_j$, then 
\begin{align*} 
G'_{j+1} -P'\phi(G'_{j+1} )\gamma(P')^{-1} & = G'_j-P'\phi(G'_j)\gamma(P')^{-1} + X^j S_j - P' X^j Q^j S_j \gamma(P')^{-1} \\
& = X^j( R_j +S_j - Q^{j-k+1} P' S_j Q^{k-1} \gamma(P')^{-1}),
\end{align*}
and we can find $S_j$ such that $R_j +S_j - Q^{j-k+1} P' S_j Q^{k-1} \gamma(P')^{-1} \in X \M_d(\OO_E \dcroc{X})$ since the map $S \mapsto S - p^{j-k+1} P'(0) \cdot S \cdot (Q^{k-1} \gamma(P')^{-1})(0)$ is obviously a bijection from $\M_d(\OO_E)$ to itself. By induction on $j \geq k$, this allows us to find a sequence $(G'_j)_{j \geq k}$ which converges for the $X$-adic topology to a matrix $G'$ satisfying $P'\phi(G') = G' \gamma(P')$.
\end{proof}

\begin{proof}[Proof of theorem A]
The representation $V_{k,a_p}^*$ is crystalline with Hodge-Tate weights $0$ and $-(k-1)$. By proposition \ref{wachrec}, one can attach to it an effective Wach module $\nwach_{k,a_p}$ of height $k-1$. If we choose a basis of $\nwach_{k,a_p}$ and denote by $P$ and $G$ the matrices of $\phi$ and $\gamma \in \Gamma$, then $P\phi(G) = G \gamma(P)$. In addition, $G \in \Id + X \M_d(\OO_E\dcroc{X})$, $\det(P)=Q^{k-1}$, $\det(P(0))=p^{k-1}$ and $\Tr(P(0))=a_p$. If $a'_p \in \OO_E$ satisfies $\vp(a_p-a'_p) \geq \vp(a_p^2-4p^{k-1}) + \alpha(k-1)$, then corollary \ref{exhz} applied to $\eps=a'_p-a_p$ and proposition \ref{mmodr} give us matrices $P'=(\Id+H)P$ and $G'$ which define a Wach module $\nwach'$ coming from a crystalline representation $V'$. 

The matrix of $\phi$ on $\dcris(V')$ is $P'(0)$ and has determinant $p^{k-1}$ and trace $a'_p$. Since $\Id+H$ is invertible, the matrices $P$ and $P'$ are equivalent and proposition \ref{wtfil} implies that the filtration on $\dcris(V')$ has weights $0$ and $-(k-1)$. This shows that $\nwach'=\nwach_{k,a'_p}$. If $\vp(a_p-a'_p) > \vp(a_p^2-4p^{k-1}) + \alpha(k-1)$, then the matrices $P'$ and $G'$ are congruent modulo $\pi_E$ to $P$ and $G$ so that $\overline{V}_{k,a'_p} = \overline{V}_{k,a_p}$. 

Finally, note that $\vp(a_p^2-4p^{k-1})=2 \vp(a_p)$ if $\vp(a_p) < (k-1)/2$ and that if $\vp(a_p) \geq (k-1)/2$, then the main result of \cite{BLZ} actually gives a better bound than $2 \vp(a_p) + \alpha(k-1)$.
\end{proof}


\section{Local constancy with respect to $k$}\label{lcstk}

In this section, we give a proof of theorem B. The idea is to show that $V_{k,a_p}$ is a point in one of the families of trianguline representations constructed by Colmez in \S 5.1 of \cite{C08}. We start by briefly recalling Colmez' constructions, referring the reader to Colmez' article for more details. 

Let $\calR_E$ denote the Robba ring with coefficients in $E$. If $\delta : \Qp^\times \to E^\times$ is a continuous character, then one defines a $1$-dimensional $(\phi,\Gamma)$-module $\calR(\delta)$ by $\calR(\delta) = \calR \cdot e_\delta$ where $\phi(e_\delta)=\delta(p) e_\delta$ and $\gamma(e_\delta) = \delta(\chi(\gamma)) e_\delta$. Given two characters $\delta_1$ and $\delta_2$, Colmez constructs non-trivial extensions $0 \to \calR(\delta_1) \to \dfont \to \calR(\delta_2) \to 0$ and under certain hypothesis on $\delta_1$ and $\delta_2$, the $2$-dimensional $(\phi,\Gamma)$-module $\dfont$ is \'etale in the sense of Kedlaya (see \cite{KLMT}) and therefore gives a $p$-adic representation $V(\delta_1,\delta_2)$ using Fontaine's construction. These representations are the trianguline representations of \cite{C08} (note that if $\delta_1\delta_2^{-1}$ is of the form $x^i$ with $i \geq 0$ or $|x|x^i$ with $i \geq 1$, then there are several non-isomorphic possible extensions and one needs to introduce an $\mathcal{L}$-invariant; this needs not bother us).

If $y \in E^\times$, let $\mu_y : \Qp^\times  \to E^\times$ be the character defined by $\mu_y(p)=y$ and $\mu_y |_{\Zp^\times} = 1$. Let $\chi : \Qp^\times  \to E^\times$ be the character defined by $\chi(p)=1$ and $\chi(x)=x$ if $x \in \Zp^\times$. The following result follows from the computations of \S 4.5 of \cite{C08}.

\begin{prop}\label{tricry}
If $y \in \MM_E$ and if $k-1 > \vp(y)$, then $V(\mu_y,\mu_{1/y} \chi^{1-k}) = V^*_{k,a_p}$ with $a_p = y + p^{k-1}/y$.
\end{prop}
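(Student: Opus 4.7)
My plan is to identify the filtered $\phi$-module attached to the trianguline representation $V(\mu_y,\mu_{1/y}\chi^{1-k})$ and compare it directly with $D_{k,a_p}$, then conclude via the Colmez--Fontaine equivalence (the same one used in the introduction to define $V_{k,a_p}$).

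First, I would verify that the trianguline $(\phi,\Gamma)$-module $\dfont(\mu_y,\mu_{1/y}\chi^{1-k})$ is \'etale and corresponds to a crystalline representation. This is precisely the content of the discussion in \S 4.5 of \cite{C08}: the character $\mu_y$ is unramified and trivially crystalline, while $\mu_{1/y}\chi^{1-k}$ is locally algebraic on $\Zp^\times$ of weight $k-1$, so both rank-one pieces are crystalline. The non-criticality hypothesis $\vp(y) < k-1$, i.e.\ $\vp(\delta_1(p))$ strictly less than the Hodge--Tate weight of $\delta_2$, then ensures that $V(\mu_y,\mu_{1/y}\chi^{1-k})$ is itself crystalline (not merely semistable or de Rham) and also pins down which extension class among the non-trivial ones it comes from, so that no $\mathcal L$-invariant ambiguity remains.

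Next I would extract the filtered $\phi$-module $\dcris(V(\mu_y,\mu_{1/y}\chi^{1-k}))$. By the formalism of \S 4.5 of \cite{C08}, the triangulation gives $\dcris$ a basis $\{f_1,f_2\}$ in which the $\phi$-stable line is $E f_1$ with $\phi(f_1)=y f_1$, coming from the subobject $\calR(\mu_y)$. The twist $\chi^{1-k}$ on the quotient contributes a factor of $p^{k-1}$ to the corresponding $\phi$-eigenvalue on $\dcris$, so that $\phi(f_2) \equiv (p^{k-1}/y)\,f_2 \bmod E f_1$. Hence the characteristic polynomial of $\phi$ is $(X-y)(X-p^{k-1}/y) = X^2 - a_p X + p^{k-1}$ with $a_p = y + p^{k-1}/y$, matching $\phi$ on $D_{k,a_p}$, and the Hodge--Tate weights are $\{0,k-1\}$, also matching those of $V^*_{k,a_p}$.

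It remains to compare filtrations: on a two-dimensional weakly admissible filtered $\phi$-module with Hodge--Tate weights $\{0,k-1\}$ and with the above $\phi$, the filtration is determined by the line $\Fil^{k-1}$, and weak admissibility together with the fact that this line cannot coincide with the $\phi$-stable line $E f_1$ (otherwise $\vp(y) = 0$ or the triangulation would split, contradicting non-criticality) forces $\Fil^{k-1}$ to match the line $E e_1$ of $D_{k,a_p}$ up to change of basis. Thus $\dcris(V(\mu_y,\mu_{1/y}\chi^{1-k})) \simeq D_{k,a_p}$ as filtered $\phi$-modules, and Colmez--Fontaine yields $V(\mu_y,\mu_{1/y}\chi^{1-k}) = V^*_{k,a_p}$. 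The main obstacle is keeping track of Colmez's normalisations, and in particular verifying that the $\phi$-eigenvalue contributed by $\calR(\mu_{1/y}\chi^{1-k})$ is $p^{k-1}/y$ rather than, e.g., $y/p^{k-1}$ or $1/y$: once the conventions for the twist by $\chi^{1-k}$ and the identification of the \'etale $\phi$-eigenvalue with the crystalline one are aligned, the proposition is a direct reading of \S 4.5 of \cite{C08}.
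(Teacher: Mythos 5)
Your proposal is correct and follows essentially the same route as the paper, which gives no proof beyond the remark that the proposition ``follows from the computations of \S 4.5 of \cite{C08}''; you have simply spelled out what those computations yield (extracting $\dcris$ from the triangulation, matching the $\phi$-eigenvalues $y$ and $p^{k-1}/y$, and pinning down the filtration by weak admissibility). One small slip in the parenthetical: if $\Fil^{k-1}$ coincided with the $\phi$-eigenline $E f_1$ for the eigenvalue $y$, weak admissibility would force $\vp(y)\geq k-1$, not $\vp(y)=0$; the constraint $\vp(y)\leq 0$ is what rules out the \emph{other} eigenline (for $p^{k-1}/y$), and both are needed (and are supplied by $0<\vp(y)<k-1$) before you can conclude the filtration line is in general position.
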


We now recall Colmez' construction of families of trianguline representations. Recall first that there is a natural parameter space $\scrX$ for characters $\delta : \Qp^\times  \to E^\times$. Denote by $\delta(x)$ the character corresponding to a point $x \in \scrX$. 

\begin{prop}\label{trifam}
If $\delta_1$ and $\delta_2$ are two characters as above, such that $\delta_1\delta_2^{-1}(p) \notin p^{\ZZ}$, then there exists a neighborhood $\scrU$ of $(\delta_1,\delta_2) \in \scrX^2$ and a free $\OO_{\scrU}$-module $V$ of rank $2$ with an action of $\galp$ such that $V(u)=V(\delta_1(u),\delta_2(u))$ if $u \in \scrU$.
\end{prop}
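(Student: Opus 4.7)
My plan is to reduce the statement to Colmez's construction of families of trianguline $(\phi,\Gamma)$-modules in \S 5 of \cite{C08} and then invoke Kedlaya's slope filtration theorem \cite{KLMT} to pass to Galois representations. The condition $\delta_1\delta_2^{-1}(p)\notin p^{\ZZ}$ is designed precisely to avoid the exceptional locus where additional $\mathcal{L}$-invariant parameters are needed, so that the local moduli of extensions is one-dimensional and varies freely.

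Concretely, I would first fix a small affinoid neighborhood $\scrU \subset \scrX^2$ of $(\delta_1,\delta_2)$ and construct the universal rank-one $(\phi,\Gamma)$-modules $\calR_{\scrU}(\delta_1(u))$ and $\calR_{\scrU}(\delta_2(u))$ over the Robba ring $\calR_{\scrU} = \OO_{\scrU} \widehat{\otimes} \calR_E$ by the obvious formulas $\phi(e_{\delta_i(u)}) = \delta_i(u)(p)\,e_{\delta_i(u)}$ and $\gamma(e_{\delta_i(u)}) = \delta_i(u)(\chi(\gamma))\,e_{\delta_i(u)}$. The extensions of $\calR_{\scrU}(\delta_2(u))$ by $\calR_{\scrU}(\delta_1(u))$ are classified by the cohomology $H^1_{\phi,\Gamma}(\calR_{\scrU}(\delta_1\delta_2^{-1}(u)))$, which Colmez computes in \S 2--3 of \cite{C08}. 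Under the hypothesis $\delta_1\delta_2^{-1}(p)\notin p^{\ZZ}$ (and, away from the points where $\delta_1\delta_2^{-1}$ is of the form $x^i$ or $|x|x^i$, no $\mathcal{L}$-invariant is needed), this $H^1$ is a locally free $\OO_{\scrU}$-module of rank one after possibly shrinking $\scrU$, and one picks a generator that restricts to the class of the given extension at $(\delta_1,\delta_2)$. Denote by $\dfont$ the resulting universal extension over $\calR_{\scrU}$.

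Next I would need to show that $\dfont$ is pointwise étale throughout $\scrU$. By hypothesis $\dfont(\delta_1,\delta_2)$ is étale, i.e. its two Kedlaya slopes are $0$. Kedlaya's theorem \cite{KLMT} says that the slope filtration varies semi-continuously in families, and the sum of the slopes equals $\vp(\delta_1\delta_2^{-1}(p)\cdot \delta_1(p)^2)$, which is continuous in $u$ and hence constant on a small enough $\scrU$; combined with the hypothesis $\delta_1\delta_2^{-1}(p)\notin p^{\ZZ}$ (which prevents the two slopes from separating integrally and forcing a jump), a further shrinking of $\scrU$ guarantees that $\dfont(u)$ is étale for all $u\in \scrU$. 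Then Kedlaya--Liu's extension of Fontaine's equivalence to families produces a locally free $\OO_{\scrU}$-module $V$ of rank $2$ with continuous $\galp$-action such that $V(u) = V(\delta_1(u),\delta_2(u))$ for every $u\in\scrU$.

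The main obstacle I anticipate is controlling the behavior of the family in a neighborhood of the exceptional character locus where $\delta_1\delta_2^{-1}$ meets the set $\{x^i,\ |x|x^i\}$; on such a locus $H^1_{\phi,\Gamma}$ jumps in rank and the family of extensions is not unique without specifying an $\mathcal{L}$-invariant. The assumption $\delta_1\delta_2^{-1}(p) \notin p^{\ZZ}$ is exactly what allows the initial point $(\delta_1,\delta_2)$ to lie off this locus, and the openness of that condition lets us shrink $\scrU$ to stay away from it, so the rest of the argument is formal. Freeness of $V$ over $\OO_{\scrU}$ (as opposed to merely locally free) can then be arranged by further shrinking $\scrU$ to an affinoid with trivial Picard group.
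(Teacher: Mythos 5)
The paper gives no proof of this proposition: it is stated as a recall of Colmez's constructions from \S 5.1 of \cite{C08}, immediately before the proof of Theorem~B, so there is no argument in the paper against which to compare yours. Your sketch is a reasonable reconstruction of the route Colmez follows: universal rank-one $(\phi,\Gamma)$-modules over an affinoid $\scrU$, a universal extension cut out from a locally free rank-one $H^1_{\phi,\Gamma}(\calR_{\scrU}(\delta_1\delta_2^{-1}))$, pointwise \'etaleness, and then the family version of Fontaine's equivalence to produce $V$.

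Two points should be tightened. First, the determinant of the extension $\dfont$ is $\calR(\delta_1\delta_2)$, so its total slope is $\vp(\delta_1(p))+\vp(\delta_2(p))$; your expression $\vp(\delta_1\delta_2^{-1}(p)\cdot\delta_1(p)^2)$ equals $3\vp(\delta_1(p))-\vp(\delta_2(p))$ and is not that quantity (though the only thing actually used is that the total slope is locally constant, which does hold). Second, and more substantively, your explanation of the role of $\delta_1\delta_2^{-1}(p)\notin p^{\ZZ}$ in the \'etaleness step --- ``prevents the two slopes from separating integrally and forcing a jump'' --- is not a coherent mechanism. As you correctly say in the first paragraph, that hypothesis is an \emph{open} condition on the pair of characters that keeps $\delta_1(u)\delta_2(u)^{-1}$ away from the exceptional set $\{x^i,\ |x|x^i\}$ (all of which take the value $p$ into $p^{\ZZ}$), which is what guarantees $H^1$ has constant rank $1$ over $\scrU$ and that no $\mathcal{L}$-invariant parameter is needed. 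The preservation of \'etaleness near a point where $\dfont(u_0)$ is \'etale comes from a different source: semicontinuity of the maximal slope, together with local constancy of the total slope and the fact that slopes of a rank-two object lie in $\tfrac12\ZZ$ (or, equivalently, the openness of the \'etale locus in the family slope theory). That argument does not use the $p^{\ZZ}$ hypothesis at all; you should keep these two uses separate rather than conflating them into one step.
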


\begin{proof}[Proof of theorem B]
If $k-1 > \vp(a_p)$, put $\delta_1=\mu_{a_p}$ and $\delta_2=\mu_{1/a_p} \chi^{1-k}$ so that $V(\delta_1,\delta_2) = V^*_{k,a_p+p^{k-1}/a_p}$ by proposition \ref{tricry}. Theorem A implies that if $\vp(p^{k-1}/a_p)> 2 \cdot \vp(a_p) + \alpha(k-1)$, then $\overline{V}(\delta_1,\delta_2) = \overline{V}^*_{k,a_p}$. Proposition \ref{trifam} implies the existence of a neighborhood $\scrU$ of $(\delta_1,\delta_2) \in \scrX^2$ such that $\overline{V}(\delta_1(u),\delta_2(u))$ is constant. This implies that there exists $m$ such that $\overline{V}^*_{k',a_p} = \overline{V}^*_{k,a_p}$ if $k' \geq k$ and $k'-k \in p^{m-1} (p-1) \ZZ$ and this finishes the proof of theorem B.
\end{proof}

\section{The algorithm for computing the reduction}
\label{lift}

We start by giving a proof of the main technical result which is used in order to justify that it is enough to work with truncations of $(\phi,\Gamma)$-modules. 

\begin{prop}\label{extunik}
If $1 \leq n \leq +\infty$ and $P$ and $G_k$ are two matrices in $\M_d(\OO_E/\pi_E^n \dcroc{X})$ such that $\det(P)=Q^{k-1} \times \mathrm{unit}$ and $G_k=\Id\bmod{X}$ and $P\phi(G_k)=G_k\gamma(P) \bmod{\phi(X)^k}$, then : 
\begin{enumerate}
\item there exists $G \in \M_d(\OO_E/\pi_E^n \dcroc{X})$ such that $G=G_k \bmod{X^k}$ and $P\phi(G)=G\gamma(P)$;
\item if $P'$ and $G'$ are two matrices equal to $P$ and $G$ modulo $\phi(X)^k$ and $X^k$ and satisfying the same conditions as $P$ and $G$, then $\rep(P',G')=\rep(P,G)$.
\end{enumerate}
\end{prop}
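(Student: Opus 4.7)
For (1), I look for the extension in the form $G = G_k + X^k\Delta$ with $\Delta \in \M_d(\OO_E/\pi_E^n\dcroc{X})$. Writing the hypothesis as $P\phi(G_k) - G_k\gamma(P) = \phi(X)^kR = X^kQ^kR$ and substituting, the equation $P\phi(G) = G\gamma(P)$ becomes, after dividing by $X^k$,
\[ \Delta\gamma(P) = Q^k\bigl(R + P\phi(\Delta)\bigr). \]
The key algebraic point is that $\gamma(Q)/Q$ is a unit in $\OO_E\dcroc{X}^\times$ (it is a power series with constant term $1$, since $Q$ and $\gamma(Q)=Q\circ\gamma$ have the same roots $\zeta-1$ for primitive $p$-th roots $\zeta$). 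Consequently $\det\gamma(P) = Q^{k-1}\cdot(\text{unit})$, and multiplying on the right by $\operatorname{adj}(\gamma(P))$ cancels $Q^{k-1}$ against one of the factors of $Q$ in $Q^k$, yielding an affine fixed-point equation $\Delta = QU + QV\phi(\Delta)W$ for fixed matrices $U, V, W \in \M_d(\OO_E/\pi_E^n\dcroc{X})$. The associated linear operator $T \mapsto QV\phi(T)W$ is strictly $X$-adically contracting modulo $\pi_E$, since $Q \equiv X^{p-1}\pmod{\pi_E}$ while $\phi$ multiplies $X$-adic valuation by $p$. Hence the iteration $\Delta_0 = 0$, $\Delta_{m+1} = QU + QV\phi(\Delta_m)W$ converges modulo $\pi_E$, and graded lifting through the $\pi_E$-adic filtration produces a solution over $\OO_E/\pi_E^n$. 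The identical contraction applied to the homogeneous equation $P\phi(H) = H\gamma(P)$ for $H \in X^k\M_d$ shows uniqueness of the extension, which I will need for (2).

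For (2), I will construct $M \in \Id + X^k\M_d(\OO_E/\pi_E^n\dcroc{X})$ satisfying both $MP' = P\phi(M)$ and $MG' = G\gamma(M)$; any such $M$ is an isomorphism of the associated $(\phi,\Gamma)$-modules. The trick is to enforce only the $\phi$-equation and to deduce the $\gamma$-equation from uniqueness. Iterating $M_{m+1} = P\phi(M_m)P'^{-1}$ from $M_0 = \Id$, the initial correction is
\[ M_1 - \Id = (P - P')P'^{-1} = X^kQ^kR \cdot \operatorname{adj}(P')/(Q^{k-1}\cdot\text{unit}) \in X^kQ\cdot\M_d(\OO_E/\pi_E^n\dcroc{X}), \]
with no denominators because the $Q^{k-1}$ from $\det P'$ is cancelled by one factor of $Q$ from $\phi(X)^k$. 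Each subsequent iterate gains a further factor of the form $\phi^{j-1}(Q)$, giving the estimate $v_X(M_{m+1}-M_m) \geq k + p^m - 1$ modulo $\pi_E$; graded lifting again yields $M$, which is automatically invertible since $M \equiv \Id \pmod{X^k}$.

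Now define $G'' := MG'\gamma(M)^{-1}$. Using $P'\phi(G') = G'\gamma(P')$, the commutation $\phi\gamma = \gamma\phi$, and the identity $\gamma(\phi(M)) = \gamma(P)^{-1}\gamma(M)\gamma(P')$ obtained by applying $\gamma$ to $\phi(M) = P^{-1}MP'$, a direct computation yields
\[ P\phi(G'') = MP'\phi(G')\gamma(\phi(M))^{-1} = MG'\gamma(P')\gamma(P')^{-1}\gamma(M)^{-1}\gamma(P) = G''\gamma(P). \]
Since $\gamma$ preserves the ideal $(X^k)$ (because $\gamma(X) = \chi(\gamma)X + O(X^2)$ generates the same ideal as $X$), we have $\gamma(M) \equiv \Id \pmod{X^k}$, so $G'' \equiv G' \equiv G \pmod{X^k}$. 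By the uniqueness from (1), $G'' = G$, i.e.\ $MG' = G\gamma(M)$, and therefore $\rep(P',G') = \rep(P,G)$.

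The main obstacle is the careful tracking of $Q$-factors at every stage: both iterations apparently require inverting $P'$ or $\gamma(P)$, injecting $Q^{k-1}$ into a denominator, and one must verify that the $Q^k$ produced either by $\phi(X)^k$-smallness of the error or by $\phi$ applied to $X^kQ^j$-terms always supplies enough $Q$'s to cancel this denominator and leave one spare factor of $Q$ (which modulo $\pi_E$ becomes $X^{p-1}$) to drive the contraction.
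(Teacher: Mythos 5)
Your proof is correct and rests on the same two pillars as the paper's: (a) $\det\gamma(P)=Q^{k-1}\times\text{unit}$, so that $Q^{k-1}\gamma(P)^{-1}$ (resp.\ $Q^{k-1}P'^{-1}$) has entries in $\OO_E/\pi_E^n\dcroc{X}$, which allows one spare factor of $Q\equiv X^{p-1}\bmod\pi_E$ to drive an $X$-adic contraction, and (b) a change-of-basis matrix $M$ making $P$ and $P'$ coincide, followed by a contraction argument for the $\gamma$-matrix. Where you diverge is the packaging. For (1), the paper builds $G$ degree-by-degree, at step $j$ solving $S\mapsto S-p^{j-k+1}P(0)\,S\,(Q^{k-1}\gamma(P)^{-1})(0)$, whereas you collect the whole recursion into a single affine fixed-point equation $\Delta=QU+QV\phi(\Delta)W$ and invoke completeness; these are the same estimate read two ways. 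For (2), the paper also constructs $M$ degree-by-degree (multiplying by $\Id+X^jT_j$ at each step), while you run the cleaner Banach iteration $M_{m+1}=P\phi(M_m)P'^{-1}$, whose error drops by $\phi^m(Q)$ at each step. The most genuinely different move is the end of (2): the paper reduces to $P=P'$ and shows $H=G'G^{-1}$ (a multiplicative fixed point of $H\mapsto P\phi(H)P^{-1}$) equals $\Id$ by a direct contraction, whereas you instead note that the uniqueness built into part (1) already forces $MG'\gamma(M)^{-1}=G$ once the $\phi$-equation for $M$ holds. Your version buys a small conceptual economy (one contraction lemma is reused instead of restated in multiplicative form) at the cost of having to make the uniqueness half of (1) explicit, which the paper leaves implicit until needed. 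One remark: your ``graded lifting through the $\pi_E$-adic filtration'' is sound, but note that the paper works directly over $\OO_E/\pi_E^n\dcroc{X}$ in one pass by observing that $p^{j-k+1}$ with $j\geq k$ is topologically nilpotent there, so the grading is a stylistic choice rather than a necessity.
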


\begin{proof}
We start by proving (1). Since $\det(P) = Q^{k-1} \times \mathrm{unit}$, the same is true of $\det(\gamma(P))$ and hence we have $Q^{k-1} \gamma(P)^{-1} \in \M_d(\OO_E/\pi_E^n \dcroc{X})$. We can therefore rewrite $P\phi(G_k)=G_k\gamma(P) \bmod{\phi(X)^k}$ as \[ G_k-P\phi(G_k)\gamma(P)^{-1} \in X^k Q\M_d(\OO_E/\pi_E^n \dcroc{X}), \] since this is true after multiplying by $Q^{k-1}$ and $Q$ is not a zero divisor in $\OO_E/\pi_E^n \dcroc{X}$. Assume that $j \geq k$ and that we have a matrix $G_j$ such that \[ G_j-P\phi(G_j)\gamma(P)^{-1} = X^j R_j \in X^j \M_d(\OO_E/\pi_E^n \dcroc{X}). \] If $S_j \in \M_d(\OO_E/\pi_E^n)$ and if we set $G_{j+1}=G_j+X^j S_j$, then 
\begin{align*} 
G_{j+1} -P\phi(G_{j+1} )\gamma(P)^{-1} & = G_j-P\phi(G_j)\gamma(P)^{-1} + X^j S_j - P X^j Q^j S_j \gamma(P)^{-1} \\
& = X^j( R_j +S_j - Q^{j-k+1} P S_j Q^{k-1} \gamma(P)^{-1}),
\end{align*}
and we can find $S_j$ such that $R_j +S_j - Q^{j-k+1} P S_j Q^{k-1} \gamma(P)^{-1} \in X \M_d(\OO_E/\pi_E^n \dcroc{X})$ since the map $S \mapsto S - p^{j-k+1} P(0) \cdot S \cdot (Q^{k-1} \gamma(P)^{-1})(0)$ is obviously a bijection from $\M_d(\OO_E/\pi_E^n)$ to itself. By induction on $j \geq k$, this allows us to find a sequence $(G_j)_{j \geq k}$ which converges for the $X$-adic topology to a matrix $G$ satisfying (1).

In order to prove (2), we start by showing that there exists a matrix $M \in \GL_d(\OO_E/\pi_E^n \dcroc{X})$ such that $M^{-1} P' \phi(M)=P$. We have by hypothesis $P'=P+\phi(X)^k S$ and hence $P'=(1+X^k R) P$ with $R=SQ^k P^{-1}$. By induction and successive approximations, we only need to show that if $P'=(1+X^j R_j) P$ with $j \geq k$, then there exists $T_j \in \M_d(\OO_E/\pi_E^n)$ such that $(1+X^j T_j)^{-1} P' \phi(1+X^j T_j)=(1+X^{j+1}R_{j+1})P$. We have 
\begin{multline*} 
(1+X^j T_j)^{-1} \cdot (1+X^j R_j) \cdot P \cdot \phi(1+X^j T_j) \\ 
= (1+X^j(R_j-T_j + Q^{j-k+1}PT_j Q^{k-1}P^{-1}) + \mathrm{O}(X^{j+1})) \cdot P 
\end{multline*}
and the claim follows from the fact that $T \mapsto T - p^{j-k+1} P(0) \cdot T \cdot (Q^{k-1}P^{-1})(0)$ is obviously a bijection from $\M_d(\OO_E/\pi_E^n)$ to itself. In order to prove (2), we are therefore reduced to the case $P=P'$. If we set $H=G'G^{-1}$, then the two equations $P\phi(G)=G\gamma(P)$ and $P\phi(G')=G'\gamma(P)$ give $P \phi(H) = H P$, with $H=\Id \bmod{X^k}$. Let $H_0=H$  and set $H_{m+1} = P \phi(H_m) P^{-1}$. Since $H=\Id \bmod{X^k}$, we can write $H_0 = \Id+X^{k-1} \phi^0(X) R_0$ and an easy induction shows that we can write $H_m=\Id+X^{k-1} \phi^m(X) R_m$ with $R_m \in \M_d(\OO_E/\pi_E^n \dcroc{X})$ so that $H_m \to \Id$ as $m \to + \infty$. The equation $P \phi(H) = H P$ implies that $H_m=H$ for all $m \geq 0$ and we are done.
\end{proof}

We now give a proof of theorem C, which we recall here. 

\begin{theo}\label{main}
If $n \geq 1$, then $W_{k,a_p}(n)$ is nonempty and there exists $n(k,a_p) \geq 1$ with the property that if $n \geq n(k,a_p)$ and if $(\overline{P}, \overline{G})$ is the image of any $(P,G) \in W_{k,a_p}(n)$, then $\rep(\overline{P}, \overline{G})^{\mathrm{ss}} = \overline{V}_{k,a_p}^*$.
\end{theo}

\begin{proof}
The representation $V_{k,a_p}^*$ is a crystalline representation with Hodge-Tate weights $-(k-1)$ and $0$ so that by proposition \ref{wachrec}, there exists an effective Wach module $\nwach_{k,a_p}$ of height $k-1$ with the property that $V(\nwach_{k,a_p}) \simeq V_{k,a_p}^*$. If $P$ and $G$ are the matrices of $\phi$ and $\gamma$ in some basis of $\nwach_{k,a_p}$ then they obviously satisfy the equation $P\phi(G) = G \gamma(P)$ and $G=\Id\bmod{X}$ by definition. The determinant of $V_{k,a_p}^*$ is $\chi^{k-1}$ so that $\det(P)=Q^{k-1} \times u$ with $u \in 1+X \OO_E\dcroc{X}$. The map $v \mapsto \phi(v)/v$ from $1+X \OO_E\dcroc{X}$ to itself is a bijection and since $p \neq 2$, every element of $1+X \OO_E\dcroc{X}$ has a square root. We can therefore modify $P$ and $G$ accordingly so that $\det(P)=Q^{k-1}$. The fact that $\dcris(V_{k,a_p}^*)=D_{k,a_p} = E \otimes_{\OO_E} \nwach_{k,a_p} / X \nwach_{k,a_p}$ implies that $\Tr(P)=a_p \bmod{X}$. Finally by proposition \ref{wawe}, the operator $(\gamma_1-1)(\gamma_1-\chi(\gamma_1)^{k-1})$ is zero on $E \otimes_{\OO_E} \nwach_{k,a_p} / Q \nwach_{k,a_p}$ so that if $G_1=G \gamma(G) \cdots \gamma^{p-2}(G)$ and $\Pi(Y)=(Y-1)(Y-\chi(\gamma_1)^{(k-1)})$, then $\Pi(G_1) = 0 \bmod{Q}$. This shows that the images of $P$ and $G$ in $\M_2(\OO_E \dcroc{X} / (\pi_E^n,\phi(X)^k))$ belong to $W_{k,a_p}(n)$ for all $n \geq 1$ so that $W_{k,a_p}(n)$ is nonempty.

We now prove the existence of $n(k,a_p)$. There are only finitely many semisimple $k_E$-linear $2$-dimensional representations of $\galp$ so that if for infinitely many $n$ there exists $(P,G) \in W_{k,a_p}(n)$ whose image $(\overline{P}, \overline{G})$ satisfies $\rep(\overline{P}, \overline{G})^{\mathrm{ss}} \neq \overline{V}_{k,a_p}^*$ then there exists some semisimple $k_E$-linear $2$-dimensional representation $U$ of $\galp$ which arises from $(P,G) \in W_{k,a_p}(n)$ for infinitely many $n$'s. By a standard compacity argument (recall that the $W_{k,a_p}(n)$ are finite sets), this implies that we can find a compatible sequence $(P_n,G_n)_{n \geq 1}$ with each term ``reducing mod $\pi_E$'' to $U$. The $P_n$ and the $G_n$ converge to $P$ and $G$ in $\M_2(\OO_E \dcroc{X})$ and $P$ and $G$ still satisfy conditions (1), (2), (3) and (4) of the definition of $W_{k,a_p}(n)$ since these conditions are continuous. In particular, conditions (1), (2) and the first part of (3) imply that $P$ and $G$ define a Wach module, which then comes from a crystalline representation $V$. Condition (3) then implies that $\dcris(V) \simeq D_{k,a_p}$ as $\phi$-modules while condition (4) along with proposition \ref{wawe} implies that the Hodge-Tate weights of $V$ belong to $\{0;-(k-1)\}$. The fact that $\dcris(V) \simeq D_{k,a_p}$ implies that the sum of the weights is $-(k-1)$ so that $\dcris(V) \simeq D_{k,a_p}$ as filtered $\phi$-modules and hence $V \simeq V_{k,a_p}^*$. But then $U = \overline{V}^{\mathrm{ss}} = \overline{V}_{k,a_p}^*$ which is a contradiction. This shows the existence of $n(k,a_p)$ and finishes the proof of the theorem.
\end{proof}

\section{Identifying mod $p$ representations}
\label{ident}

If $P$ and $G$ are two matrices in $\M_2(k_E \dcroc{X})$ such that $\det(P)=Q^{k-1} \times \mathrm{unit}$ and $G= \Id \bmod{X}$ and $P\phi(G)=G\gamma(P) \bmod{\phi(X)^k}$, then by proposition \ref{extunik} there is a well-defined $k_E$-linear representation $\rep(P,G)$ associated to $P$ and $G$. In this section, we give a crude method for determining which one it is. 

Recall that if $V$ is a $k_E$-linear representation of $\galp$, then by B.1.4 of \cite{F90} there is a $k_E\dcroc{X}$-lattice $\dfont^+(V)$ inside $\dfont(V)$ which is stable under $\phi$ and the action of $\Gamma$ and such that any other such lattice $\nwach$ satisfies $\nwach \subset \dfont^+(V)$. If $M$ is the matrix of a basis of $\nwach$ in a basis of $\dfont^+(V)$ then $\det(\phi | \nwach)=\det(\phi | \dfont^+(V)) \cdot \phi(\det(M))/\det(M)$. In particular, if $\det(\phi | \nwach)$ is $Q^{k-1} \times \mathrm{unit}$ then $\det(M)$ divides $X^{k-1}$. The algorithm for determining $\rep(P,G)$ is then the following~:

\begin{enumerate}
\item make a list of all the $k_E$-linear $2$-dimensional representations of $\galp$;
\item for each of them, compute $P$ and $G$, the matrices of $\phi$ and $\gamma$ on $\dfont^+(V)$ to precision $X^{(p+1)k+k-1}$;
\item make a list of all the $M^{-1}P\phi(M)$ and $M^{-1}G\gamma(M)$ for the finitely many $M \in \M_2(k_E\dcroc{X}/X^{(p+1)k+k-1})$ such that $\det(M)$ divides $X^{k-1}$
\end{enumerate}

Step (2) is an interesting exercise in $(\phi,\Gamma)$-modules. Note also that in step (3) we need to multiply by $M^{-1}$ so that the precision drops from $X^{(p+1)k+k-1}$ to $X^{(p+1)k}=\phi(X)^k$. This procedure gives a complete list of all possible $(P,G)$ with the corresponding representation and given a pair $(P,G)$, the representation $\rep(P,G)$ can then be determined by a simple table lookup.

\providecommand{\bysame}{\leavevmode ---\ }
\providecommand{\og}{``}
\providecommand{\fg}{''}
\providecommand{\smfandname}{et}
\providecommand{\smfedsname}{\'eds.}
\providecommand{\smfedname}{\'ed.}
\providecommand{\smfmastersthesisname}{M\'emoire}
\providecommand{\smfphdthesisname}{Th\`ese}

\end{document}